\documentclass[12pt, reqno, a4paper, final]{amsart}


\usepackage{amsmath}                     
\usepackage{a4}
\usepackage{appendix}
\usepackage{amsthm}
\usepackage{hyperref}
\usepackage{marvosym}                   
\usepackage{mathrsfs}									  
\usepackage[all]{xy}										
\usepackage{url}												
\usepackage{verbatim,epsfig}
\usepackage[notref,notcite,draft]{showkeys} 


\bibliographystyle{alpha}


\theoremstyle{plain}

\newtheorem{thm}{Theorem}[section] 
\newtheorem{theorem}{Theorem}[section] 

\newtheorem{corollary}[thm]{Corollary}

\newtheorem{lemma}[thm]{Lemma}

\newtheorem{proposition}[thm]{Proposition}
\newtheorem*{thm*}{Theorem}
\newtheorem*{prop*}{Proposition}
\newtheorem*{cor*}{Corollary}

\theoremstyle{definition}

\newtheorem{example}[thm]{Example}

\newtheorem{remark}[thm]{Remark}


\newcommand{\CC}{{\mathbb C}}

\newcommand{\GG}{{\mathbb G}}

\renewcommand{\L}{{\mathscr L}}


\newcommand{\ip}[2]{\langle {#1} \hspace{0.03cm} | \hspace{0.03cm} {#2} \rangle}

\newcommand{\varps}{{\varepsilon}}

\newcommand{\htens}{\bar{\otimes}}

\newcommand{\tens}{\otimes}

\renewcommand{\Re}{{\operatorname{Re}}}

\newcommand{\spann}{{\operatorname{span}}}

\newcommand{\To}{\longrightarrow}

\newcommand{\red}{{\operatorname{r}}}
\newcommand{\Tor}{\operatorname{Tor}}

\newcommand{\id}{\operatorname{id}}

\newcommand{\bet}{\beta^{(2)}}

\newcommand{\alg}{{\operatorname{alg}}}

\renewcommand{\int}{{\operatorname{int}}}

\newcommand{\Pol}{{\operatorname{Pol}}}


\hyphenation{amen-ability amen-able co-amen-able equi-val-ence co-re-pre-sen-ta-tion ope-ra-tor ge-ne-ra-lizes zero-di-vi-sor}

\makeatletter
\def\equalsfill{$\m@th\mathord=\mkern-7mu
\cleaders\hbox{$\!\mathord=\!$}\hfill
\mkern-7mu\mathord=$}
\makeatother   


\title{\texorpdfstring{On the zeroth $L^2$-homology of a quantum group}{On the zeroth L2-homology of a quantum group}}
\author{David Kyed} 
\address{David Kyed,
Mathematisches Institut,
Georg-Au\-gust-Uni\-versi\-t{\"a}t G{\"o}t\-ting\-en,
Bunsenstra{\ss}e 3-5,
D-37073 G{\"o}ttingen, 
Germany.}
\email{kyed@uni-math.gwdg.de}
\urladdr{www.uni-math.gwdg.de/kyed}

\keywords{Quantum groups, $L^2$-Betti numbers, $L^2$-homology, coamenability}

\subjclass[2010]{16T05, 46L52}

\thanks{Research supported by \emph{Deutsche Forschungsgemeinschaft} and 
 \emph{The Danish Council for In-}\\
 \indent\emph{dependent Research $|$ Natural Sciences}}

\begin{document}

\begin{abstract}We prove that the zeroth $L^2$-Betti number of a compact quantum group vanishes unless the underlying $C^*$-algebra is finite dimensional and that the zeroth $L^2$-homology itself is non-trivial exactly when the quantum group is coamenable.
\end{abstract}
\maketitle
 
\section{Introduction and Notation}
This note is an addendum to the results in \cite{coamenable-betti} and  \cite{quantum-betti}  concerning $L^2$-homology and $L^2$-Betti numbers for compact quantum groups. Although the necessary definitions will be given below, the reader not familiar with these notions might benefit from casting a sidelong glance at \cite{quantum-betti} while reading the present text.\\

Consider a compact quantum group $\GG$ in the sense of Woronowicz \cite{wor-cp-qgrps}; i.e.~$\GG$ consists of a (not necessarily commutative) unital, separable $C^*$-algebra $C(\GG)$ together with a unital $*$-homomorphism 
$\Delta_\GG\colon C(\GG)\to C(\GG)\tens C(\GG)$ which furthermore has to be coassociative and satisfy a certain non-de\-gene\-racy condition.  Recall that such a $C^*$-algebraic quantum group automatically gives rise to a purely algebraic quantum group (i.e.~a Hopf $*$-algebra \cite{klimyk}) whose underlying algebra will be denoted $\Pol(\GG)$, as well as a von Neumann algebraic quantum group \cite{kustermans-vaes} whose underlying algebra will be denoted $L^\infty(\GG)$. We also recall that the $C^*$-algebra $C(\GG)$  comes with a distinguished state $h_\GG$, called the \emph{Haar state}, which plays the role corresponding to the Haar measure on a genuine, compact group. Performing the GNS construction with respect to the Haar state yields a Hilbert space $L^2(\GG)$ on which $C(\GG)$ acts via the corresponding GNS-representation $\lambda$. The quantum group is said to be of \emph{Kac type} if its Haar state is a trace and to be \emph{finite} if its $C^*$-algebra $C(\GG)$ is finite dimensional. 

\begin{example}	
The fundamental example, on which the general definition is modeled, is obtained by considering a compact, second countable, Hausdorff topological group $G$ and its commutative $C^*$-algebra $C(G)$ of continuous, complex valued functions. In this case the comultiplication is the Gelfand dual of the multiplication map $G\times G\to G$ and the Haar state is given by integration against the unique Haar probability measure $\mu$ on $G$. The GNS-space therefore identifies with $L^2(G,\mu)$ and the representation $\lambda$ with the action of $C(G)$ on $L^2(G,\mu)$ by pointwise multiplication. Similarly, the von Neumann algebra identifies with $L^\infty(G,\mu)$ and the Hopf $*$-algebra is the subalgebra of $C(G)$ generated by matrix coefficients arising from irreducible, unitary representations of $G$. 
\end{example}

\begin{example}
Consider a countable, discrete group $\Gamma$. Denote by $C^*_\red(\Gamma)$ its reduced group $C^*$-algebra acting on $\ell^2(\Gamma)$ via the left regular representation and define a comultiplication on group elements by $\Delta_\red\gamma=\gamma\tens\gamma$. This turns $C^*_\red(\Gamma)$ into a compact quantum group whose Haar state is the natural trace on $C^*_\red(\Gamma)$. Hence the GNS-space and GNS-representation can be identified, respectively, with $\ell^2(\Gamma)$ and the left regular representation,  and the enveloping von Neumann algebra is therefore nothing but the group von Neumann algebra $\L(\Gamma)$. Each element in $\Gamma$ is a one-dimensional corepresentation for this quantum group and the Hopf $*$-algebra therefore identifies with the complex group algebra $\CC\Gamma$. 
\end{example}

To any  quantum group $\GG$ (compact as well as non-compact) a so-called multiplicative unitary $W$  on $L^2(\GG)\htens L^2(\GG)$ is associated; this is a unitary which (inter alia) has the property that
\begin{align*}
C(\GG_\red) \stackrel{\text{def}}{\hbox{\equalsfill}}\lambda(C(\GG))=\overline{\spann}_\CC\{(\id\tens \omega)W\mid \omega\in B(L^2(\GG))_*\}.
\end{align*}
Furthermore, a compact quantum group $\GG$ comes with  a dual quantum group $\hat{\GG}$ of so-called discrete type. This dual quantum group has 
\begin{align}\label{dual}
c_0(\hat{\GG})&=\overline{\spann}_\CC\{(\omega\tens \id)W\mid \omega\in B(L^2(\GG))_*\}
\end{align}
as underlying $C^*$-algebra and the multiplicative unitary $W$ naturally gives rise to a comultiplication on $c_0(\hat{\GG})$. For a more detailed treatment of $C^*$-algebraic (locally compact) quantum groups and their duality theory we refer to the work of Kustermans and Vaes \cite{kustermans-vaes-C*-lc}.\\

	In \cite{quantum-betti} the notion of $L^2$-homology and $L^2$-Betti numbers was introduced in the context of compact quantum groups of Kac type; if $\GG$ is such a quantum group its $L^2$-homology and $L^2$-Betti numbers are defined as
\[
H_n^{(2)}(\GG)=\Tor_n^{\Pol(\GG)}(L^\infty(\GG),\CC) \quad \text{ and } \quad \bet_n(\GG)=\dim_{L^\infty(\GG)}(H_n^{(2)}(\GG)).
\]
Here $\CC$ is considered  a $\Pol(\GG)$-module via the counit $\varps\colon \Pol(\GG)\to \CC$ and the dimension $\dim_{L^\infty(\GG)}(-)$ is L{\"u}ck's extended Murray-von Neumann dimension (calculated with respect to the tracial Haar state $h_\GG$) introduced in \cite{luck98}. As shown in \cite{quantum-betti} Proposition 1.3, this extends the classical  definition by means of  the formula $\bet_n(\GG)=\bet_n(\Gamma)$ when $C(\GG)=C^*_\red(\Gamma)$ for a discrete, countable group $\Gamma$.  \\

In this note we prove that the zeroth $L^2$-Betti number of a compact quantum group vanishes unless the underlying $C^*$-algebra is finite dimensional; this is done in Section \ref{beta-nul}. In Section \ref{Hnul} the zeroth $L^2$-homology is studied and we prove that it vanishes exactly when $\GG$ is non-coamenable.

\vspace{0.3cm}
\paragraph{\emph{Acknowledgements.}}
Theorem \ref{beta-north}  is an improvement of Proposition 2.2 in \cite{quantum-betti}. The author thanks Stefaan Vaes for pointing this improvement out to him. Furthermore, thanks are due to the anonymous referee for pointing out a simplification in the proof of Lemma \ref{surj-bij}.

\vspace{0.3cm}

\paragraph{\emph{Notation.}} Throughout the paper, the symbol $\odot$ will be used to denote algebraic tensor products while the symbol $\htens$ will be used to denote tensor products in the category of Hilbert spaces or in the category of von Neumann algebras. All tensor products between $C^*$-algebras are assumed minimal/spatial and these will be denoted by the symbol $\tens$. 

\section{\texorpdfstring{The zeroth $L^2$-Betti number}{The zeroth L2-Betti number}}\label{beta-nul}
In their fundamental paper \cite{CG}, Cheeger and Gromov proved that if $\Gamma$ is an infinite discrete group then $\bet_0(\Gamma)=0$. We prove here the following quantum group analogue of this result, improving \cite{quantum-betti} Proposition 2.2 by removing the factor assumption on the von Neumann algebra associated with the quantum group in question.
\begin{theorem}\label{beta-north}
If $\GG$ is a compact and infinite quantum group of Kac type then $\bet_0(\GG)=0$.
\end{theorem}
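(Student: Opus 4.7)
My plan is to reformulate $\bet_0(\GG)$ as the vanishing of a right annihilator and then dispatch the annihilator with a short Peter--Weyl computation. Using $\CC \simeq \Pol(\GG)/I$ with $I := \ker\varps$, the zeroth Tor reduces to the cyclic left $L^\infty(\GG)$-module
\[
L^\infty(\GG) \otimes_{\Pol(\GG)} \CC \;\cong\; L^\infty(\GG)/L^\infty(\GG)\,I .
\]
Because $L^\infty(\GG)$ is a finite von Neumann algebra with faithful tracial state $h_\GG$ (the Kac hypothesis), the weak closure of the left ideal $L^\infty(\GG)\,I$ has the form $L^\infty(\GG)\,p$ for a unique projection $p \in L^\infty(\GG)$, and L\"uck's dimension theory gives $\bet_0(\GG) = 1 - h_\GG(p)$. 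The complementary projection $1-p$ is readily seen to lie in the right annihilator
\[
K := \bigl\{ x \in L^\infty(\GG) : a x = \varps(a)\, x \text{ for all } a \in \Pol(\GG) \bigr\}
\]
of the left ideal $L^\infty(\GG)\,I$, and to be the maximal projection of $K$, so the theorem reduces to showing $K = \{0\}$.

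To compute $K$, I read off the Fourier coefficients of any $x \in K$ against the orthonormal basis $\{\sqrt{d_\tau}\, u^\tau_{mn}\}_{\tau, m, n}$ of $L^2(\GG)$ furnished by Peter--Weyl and Schur orthogonality (for Kac type). The defining relation of $K$ applied to $a = (u^\tau_{mn})^*$ yields $(u^\tau_{mn})^* x = \varps((u^\tau_{mn})^*)\, x = \delta_{mn}\, x$, whence the $(\tau, m, n)$-Fourier coefficient of $x$ equals
\[
\sqrt{d_\tau}\, h_\GG\!\bigl((u^\tau_{mn})^* x\bigr) = \sqrt{d_\tau}\,\delta_{mn}\, h_\GG(x).
\]
Parseval then gives
\[
\|x\|_2^2 = |h_\GG(x)|^2 \sum_\tau d_\tau^2 = |h_\GG(x)|^2 \cdot \dim_\CC \Pol(\GG).
\]

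Since $\GG$ is infinite, $C(\GG)$ is infinite-dimensional, and so is its dense subalgebra $\Pol(\GG) = \bigoplus_\tau \Pol_\tau$; thus $\sum_\tau d_\tau^2 = \infty$. Combined with $x \in L^\infty(\GG) \subseteq L^2(\GG)$, which forces $\|x\|_2 < \infty$, the above identity compels $h_\GG(x) = 0$; but then all Fourier coefficients of $x$ vanish, so $x = 0$. Hence $K = \{0\}$, so $p = 1$ and $\bet_0(\GG) = 0$. The main obstacle in this plan is the opening step: matching the L\"uck dimension of $L^\infty(\GG)/L^\infty(\GG)\,I$ with the trace of the projection $p$ attached to the weak closure of $L^\infty(\GG)\,I$, and identifying $1-p$ as the maximal projection of the right annihilator $K$. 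This is routine once the structure of weakly closed left ideals in a finite von Neumann algebra is invoked, whereas the Schur orthogonality argument that finishes the proof is essentially a one-liner.
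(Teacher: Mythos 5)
Your proof is correct, and after the common opening reduction it takes a genuinely different route from the paper's. Both arguments begin identically: $H_0^{(2)}(\GG)\cong L^\infty(\GG)/J$ with $J$ the left ideal generated by $\ker\varps$, and L\"uck's Theorem 0.6 (dimension is preserved under algebraic, hence under weak, closure) reduces the theorem to showing that the weak closure $\bar{J}=L^\infty(\GG)p$ is all of $L^\infty(\GG)$, i.e.\ $p=1$; the two facts you flag as ``routine'' (weakly closed left ideals are of the form $Mp$, and $\dim_M(J)=\dim_M(\bar{J})$) are exactly the Kadison--Ringrose and L\"uck results the paper itself invokes, so there is no gap there. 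From that point the paper argues by duality: if $p\neq 1$ the counit extends to a normal functional on $B(L^2(\GG))$, applying it to a leg of the multiplicative unitary gives $(\varps\tens\id)(W)=1$, so $c_0(\hat{\GG})$ is unital and $\GG$ is simultaneously compact and discrete, hence finite. You instead note that $K=\{x: ax=\varps(a)x \text{ for all } a\in\Pol(\GG)\}$ is the right annihilator $(1-p)L^\infty(\GG)$ of $\bar{J}$ and kill it by Parseval against the Peter--Weyl orthonormal basis $\{\sqrt{d_\tau}\,u^\tau_{mn}\}$: since $\varps(u^\tau_{mn})=\delta_{mn}$, every Fourier coefficient of $x\in K$ equals $\sqrt{d_\tau}\,\delta_{mn}h_\GG(x)$, so $\|x\|_2^2=|h_\GG(x)|^2\sum_\tau d_\tau^2=|h_\GG(x)|^2\dim_\CC\Pol(\GG)$, which is finite only if $h_\GG(x)=0$ and hence $x=0$. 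This is more elementary and self-contained: it uses only Schur orthogonality and avoids the multiplicative unitary, normal extensions of the counit, and the ``compact and discrete implies finite'' theorem altogether. The trade-off is that your computation is tied to the Kac hypothesis in an essential way (both the orthonormality of $\{\sqrt{d_\tau}\,u^\tau_{mn}\}$ and the identity $\sum_\tau d_\tau^2=\dim_\CC\Pol(\GG)$ use traciality of $h_\GG$), whereas the paper's duality argument works verbatim for any compact quantum group and traciality enters only to make the dimension function available.
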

\begin{proof}
First note that
\[
H_0^{(2)}(\GG)=\Tor_0^{\Pol(\GG)}({L^\infty(\GG)},\CC)\simeq {L^\infty(\GG)}\underset{\Pol(\GG)}{\odot}\CC \simeq {L^\infty(\GG)}/J, 
\]
where $J$ denotes the left ideal in ${L^\infty(\GG)}$ generated by the kernel of the counit $\varps\colon \Pol(\GG)\to\CC$. Denote by $\bar{J}$ the strong operator closure of $J$ and note that
\[
J\subseteq \bar{J} \subseteq \overline{J}^{\alg}\stackrel{\text{def}}{\hbox{\equalsfill}}\bigcap_{\underset{J\subseteq \ker(f)}{f\in L^\infty(\GG)^*}}\hspace{-0.5cm}\ker(f),
\]
where $L^\infty(\GG)^*$ denotes the dual module; i.e.~the set 
\[
\{L^\infty(\GG)\ni x\mapsto xa\in L^\infty(\GG)\mid a\in L^\infty(\GG)\}.
\]
By \cite{luck98} Theorem 0.6 we have $\dim_{L^\infty(\GG)}(J)=\dim_{L^\infty(\GG)}(\overline{J}^\alg)$ and thus 
\[
\bet_0(\GG)=1-\dim_{L^\infty(\GG)}(J)=1-\dim_{L^\infty(\GG)}(\bar{J}). 
\]
We now aim to prove that $\bar{J}={L^\infty(\GG)}$ if $\GG$ is infinite. Assume, conversely, that that $\bar{J}\neq {L^\infty(\GG)}$ and note that $\bar{J}$ is also weak operator closed since $J$ is convex. Because $1\notin \bar{J}$, the counit $\varps\colon \Pol(\GG)\to \CC$  extends naturally to the weakly closed subspace 
\[
\CC +\bar{J}=\{\lambda 1+x\mid \lambda\in \CC, x\in \bar{J}\}\subseteq {L^\infty(\GG)},
\]
by setting $\varps(\lambda1 + x)=\lambda$.
To see that this extends $\varps$, just note that each element $a\in \Pol(\GG)$ can be written uniquely as the sum of a scalar and an element from $J$: $a=\varps(a)1+ (a-\varps(a)1)$. By \cite{KR1} Corollary 1.2.5, the extension $\varps\colon \CC+\bar{J}\to\CC$ is weakly continuous since its kernel $\bar{J}$ is weakly closed. The Hahn-Banach theorem therefore allows us to extend $\varps$ to a weakly continuous functional, also denoted $\varps$, on all of $B({L^2(\GG)})$. In particular, $\varps$ is weakly continuous on the unit ball of $B({L^2(\GG)})$ and thus $\varps\in B({L^2(\GG)})_*$. Denote by $\Lambda$ the natural inclusion $\Pol(\GG)\subseteq {L^2(\GG)}$ and by $W\in B({L^2(\GG)}\htens {L^2(\GG)})$ the multiplicative unitary for $\GG$, which for $x,y\in \Pol(\GG)$ is given by 
\[
W^*(\Lambda(x)\tens \Lambda(y))=(\Lambda\tens\Lambda)(\Delta_\GG(y)(x\tens 1)).
\]
For any $\omega\in B({L^2(\GG)})_*$ and any $x\in \Pol(\GG)$ we have
\begin{align}\label{snit-formel}
(\omega\tens \id)(W^*)(\Lambda(x))=\Lambda((\omega\tens \id)\Delta_\GG(x)).
\end{align}
This can be verified directly when $\omega$ has the form $T\mapsto \ip{T\Lambda(a)}{\Lambda(b)}$ and the general case follows from this since $B({L^2(\GG)})_*$ is the norm closure of the linear span of such functionals \cite[7.4.4]{KR2}. See e.g.~Result 1.2.5 in \cite{kustermans-vaes-C*-lc} for more details. Using the formula (\ref{snit-formel}) with $\omega=\varps$ we therefore obtain
\[
(\varps\tens\id)(W^*)=1.
\]
Since $\varps$ is weakly continuous,  $\varps\tens \id$ restricts to a $*$-homomorphism
\[
\varps\tens \id\colon {L^\infty(\GG)}\htens B({L^2(\GG)})\To B({L^2(\GG)}),
\]
and since $W\in {L^\infty(\GG)}\htens B({L^2(\GG)})$ it follows that $(\varps\tens\id)(W)=1$. This implies that the $C^*$-algebra $c_0(\hat{\GG})$ of the dual quantum group $\hat{\GG}$ (see e.g.~equation (\ref{dual}))
is unital and hence $\hat{\GG}$ is compact. Thus $\GG$ is both discrete and compact and $C(\GG)$ therefore  finite dimensional.
\end{proof}
\begin{remark}
If $C(\GG)$ has finite linear dimension $N$ it was proved in \cite{quantum-betti} Proposition 2.9
that the zeroth $L^2$-Betti number of $\GG$ equals $\frac{1}{N}$. By declaring $\frac{1}{\infty}=0$ we therefore have the formula
\[
\bet_0(\GG)=\dim_\CC(C(\GG))^{-1},
\]
for any compact quantum group $\GG$ of Kac type.
\end{remark}

In \cite{CS} Connes and Shlyakhtenko introduced $L^2$-homology and $L^2$-Betti numbers for certain tracial $*$-algebras. For these $L^2$-Betti numbers we get the following.

\begin{corollary}
If $\GG$ is an infinite, compact quantum group of Kac type  then the zeroth Connes-Shlyakhtenko $L^2$-Betti number of the tracial $*$-algebra $(\Pol(\GG),h_\GG)$ vanishes.
\end{corollary}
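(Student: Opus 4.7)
The plan is to derive the corollary from Theorem \ref{beta-north} by invoking a comparison between the two notions of $L^2$-Betti numbers. Concretely, for any augmented tracial $*$-algebra $(A,\tau)$ with augmentation $\varepsilon\colon A\to\CC$, there is an inequality
\[
\beta_0^{\mathrm{CS}}(A,\tau) \;\leq\; \dim_{M}\!\bigl(M\underset{A}{\odot}\CC\bigr),
\]
where $M$ denotes the GNS von Neumann algebra of $(A,\tau)$ and $\CC$ is the left $A$-module determined by $\varepsilon$. For the Hopf $*$-algebra $\Pol(\GG)$ the counit $\varps$ serves as augmentation, and specialising the above yields
\[
\beta_0^{\mathrm{CS}}(\Pol(\GG),h_\GG)\;\leq\;\bet_0(\GG).
\]
Theorem \ref{beta-north} then supplies the desired vanishing, since the right hand side is zero whenever $\GG$ is infinite and of Kac type.

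The comparison inequality itself is obtained in essentially the same way as the corresponding result for discrete groups implicit in \cite{CS}: one produces, from a projective resolution $P_\bullet \twoheadlongrightarrow \CC$ of the trivial left $A$-module, a compatible projective resolution of $A$ as an $A\odot A^{\op}$-bimodule via a bar-type construction twisted by $\varepsilon$, and then compares the induced maps on zeroth Tor groups. The main subtlety in this reduction is the reconciliation of the two different dimension functions ($\dim_{M}$ versus $\dim_{M\htens M^{\op}}$) appearing on either side of the inequality; this is handled by the flatness properties of L\"uck's $\dim$-function established in \cite{luck98}, together with the fact that the Haar state $h_\GG$ is a trace, so that the induction functor $M\htens M^{\op}\otimes_M -$ behaves appropriately with respect to dimension.

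The main obstacle, if one had to establish the comparison from scratch, is precisely this book-keeping between the two dimension functions; once set up correctly it is routine, and the mathematical substance of the corollary is then entirely contained in Theorem \ref{beta-north}.
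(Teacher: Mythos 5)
Your overall reduction is exactly the one the paper uses: deduce the corollary from Theorem \ref{beta-north} by comparing the Connes--Shlyakhtenko $L^2$-Betti numbers of $(\Pol(\GG),h_\GG)$ with the quantum group $L^2$-Betti numbers. The paper does this in one line by invoking Theorem 4.1 of \cite{quantum-betti}, which states that the two families of Betti numbers agree \emph{degree by degree}; in particular your degree-zero inequality holds (as an equality) for $\Pol(\GG)$, and combined with Theorem \ref{beta-north} the corollary follows. So the strategy is the same; the difference is only that you attempt to re-derive the comparison instead of citing it.

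That attempted derivation is where the gap lies. You assert the inequality $\beta_0^{\mathrm{CS}}(A,\tau)\leq\dim_M\bigl(M\odot_A\CC\bigr)$ for an \emph{arbitrary} augmented tracial $*$-algebra, but the step you describe --- producing from a projective resolution of the trivial left $A$-module $\CC$ a compatible projective resolution of $A$ as an $A\odot A^{\op}$-module --- is not available at that level of generality. It is precisely here that the Hopf $*$-algebra structure of $\Pol(\GG)$ (comultiplication and antipode) enters: it makes $A\odot A^{\op}$ free as a module over a ``diagonal'' copy of $A$ and thereby allows one to induce resolutions from $A$-modules to $A$-bimodules, identifying $\Tor_*^{A\odot A^{\op}}(M\htens M^{\op},A)$ with $\Tor_*^{A}(\,\cdot\,,\CC)$ after the dimension bookkeeping you mention (which is indeed the routine part, handled by the dimension-flatness results of \cite{luck98}). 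For a general augmented algebra no such relation between the Hochschild-type Tor groups and $\Tor_*^A(M,\CC)$ is established, and your sketch omits the one ingredient that makes it work. The fix is either to cite Theorem 4.1 of \cite{quantum-betti} directly, or to restrict the comparison claim to Hopf $*$-algebras and carry out the induction of resolutions explicitly using the antipode.
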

\begin{proof}
By Theorem 4.1 in \cite{quantum-betti} the Connes-Shlyakhtenko $L^2$-Betti numbers of the tracial $*$-algebra $(\Pol(\GG),h_\GG)$ are equal, degree by degree, to the $L^2$-Betti numbers of $\GG$.
\end{proof}

\section{\texorpdfstring{The zeroth $L^2$-homology}{The zeroth L2-homology}}\label{Hnul}

In this section we will focus on the zeroth $L^2$-homology module of a compact quantum group $\GG$.  Since the extended Murray-von Neumann dimension $\dim_{L^\infty(\GG)}(-)$ is not faithful, it may happen that the homology module $H_0^{(2)}(\GG)$ is non-trivial although, as we have just seen, its dimension $\bet_0(\GG)$  is zero whenever $\GG$ is infinite and of Kac type. In \cite{Luck02} Lemma 6.36, L{\"u}ck proves, for a discrete group $\Gamma$, that the zeroth $L^2$-homology $H_0^{(2)}(\Gamma)$ is non-vanishing exactly when $\Gamma$ is an amenable group and the aim of this section is to prove an analogue of this result for quantum groups. Since the $L^2$-homology modules $H_n^{(2)}(\GG)=\Tor_n^{\Pol(\GG)}(L^\infty(\GG),\CC)$ are defined also when the Haar state is not a trace\footnote{ the traciality is only needed in order for their dimension to be defined.}, we are going to consider the full class of compact quantum groups in this section.  Leaving the realm of quantum groups of Kac type gives rise to some minor technical problems since L{\"u}ck's results on finitely generated Hilbert modules are only available in the tracial setting, but since these technicalities are not of essential nature they are relegated to the appendix. The quantum group parallel  to L{\"u}ck's result takes the following form.

\begin{theorem}\label{H0-vanishing}
The zeroth $L^2$-homology of a compact quantum group $\GG$ is  non-vanishing if and only if  $\GG$ is  coamenable.
\end{theorem}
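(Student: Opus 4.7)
The plan is to treat the two implications separately, using the identification $H_0^{(2)}(\GG)\cong L^\infty(\GG)/J$ with $J=L^\infty(\GG)\cdot\ker(\varps)$ already recorded in the proof of Theorem~\ref{beta-north}.

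For the implication \emph{coamenable $\Rightarrow H_0^{(2)}(\GG)\neq 0$}, I would invoke the standard characterisation of coamenability by the existence of a norm-continuous character $\tilde\varps$ on $C(\GG_\red)$ extending $\varps$. By Hahn--Banach the state $\tilde\varps$ admits a (generally non-normal) state extension $\omega$ to the ambient $C^*$-algebra $L^\infty(\GG)\subseteq B(L^2(\GG))$. Since $\tilde\varps$ is multiplicative on $C(\GG_\red)$, the Cauchy--Schwarz inequality for states yields
\[
|\omega(ab)|^{2}\leq \omega(aa^{*})\,\omega(b^{*}b)=\omega(aa^{*})\,|\tilde\varps(b)|^{2}=0
\]
for every $a\in L^\infty(\GG)$ and $b\in\ker(\varps)\subseteq C(\GG_\red)$. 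Hence $\omega$ vanishes on $J$ while $\omega(1)=1$, so $1\notin J$ and the quotient is non-trivial.

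For the converse I would run a Kesten-type invertibility argument. Non-coamenability is equivalent to $\varps$ failing to be norm-bounded on $\Pol(\GG)\subseteq C(\GG_\red)$; an elementary manipulation (pass to the self-adjoint part of a witness, multiply by $\pm 1$, and shift by a scalar multiple of $1$) upgrades this to a positive element $b\in\Pol(\GG)$ with $\varps(b)>\|b\|$. Normalising gives $c:=b/\varps(b)\in\Pol(\GG)$ with $c\geq 0$, $\varps(c)=1$ and $\|c\|<1$, so $1-c\in\ker(\varps)$ and the Neumann series $\sum_{k\geq 0}c^{k}$ converges in $C(\GG_\red)\subseteq L^\infty(\GG)$ to an inverse of $1-c$. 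Consequently
\[
1=(1-c)^{-1}(1-c)\in L^\infty(\GG)\cdot\ker(\varps)=J,
\]
which forces $J=L^\infty(\GG)$ and $H_0^{(2)}(\GG)=0$.

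The step I expect to need the most care is the spectral adjustment used in the second direction, turning mere norm-unboundedness of $\varps$ into a positive element $b$ satisfying $\varps(b)>\|b\|$; the remaining ingredients (Hahn--Banach extension of states, Cauchy--Schwarz for states, Neumann-series invertibility, and the left-ideal description of $H_0^{(2)}$) are entirely standard and make no use of traciality, so the argument should handle arbitrary compact quantum groups and bypass any need for the appendix at this stage.
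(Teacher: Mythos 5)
Your proof is correct, but it takes a genuinely different route from the paper's. The paper identifies the vanishing of $H_0^{(2)}(\GG)$ with surjectivity of the map $T$ of (\ref{T-defi}) built from matrix coefficients of corepresentations, reduces to a single corepresentation $u^\alpha$, passes to the $L^2$-completion $\tilde{T}_\alpha$ via Proposition \ref{right-exact} (this is the sole purpose of the appendix), computes $\tilde{T}_\alpha^{}\tilde{T}_\alpha^*=2\big(d(u^\alpha)-\Re(\lambda(\chi(u^\alpha)))\big)$, and then invokes the Kesten-type spectral characterisation of coamenability from \cite{coamenable-betti}. You instead argue directly on the left ideal $J=L^\infty(\GG)\cdot\ker(\varps)$: for the coamenable direction, a Hahn--Banach state extension of the character $\tilde\varps$ together with Cauchy--Schwarz annihilates $J$ while keeping $\omega(1)=1$, so $1\notin J$; for the converse, unboundedness of $\varps$ on $\Pol(\GG)\subseteq C(\GG_\red)$ (which is indeed equivalent to non-coamenability, since a bounded multiplicative functional on a dense $*$-subalgebra extends to a character) yields, after symmetrising and translating, an element $1-c\in\ker(\varps)$ with $\|c\|<1$, invertible in $C(\GG_\red)$ by the Neumann series, whence $1\in J$. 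The step you flag does work: from a witness $a$ with $\|a\|\le 1$ and $\varps(a)$ large and positive, the element $b=\tfrac12(a+a^*)+1$ is positive with $\|b\|\le 2<\varps(b)$ (positivity is not even needed for the Neumann series, only $\|c\|<1$). Your argument is shorter and more elementary: it bypasses the corepresentation bookkeeping, the Kesten condition, and the appendix entirely, and it makes transparent why traciality plays no role. What the paper's route buys in exchange is the explicit operator identity tying $H_0^{(2)}(\GG)$ to the spectrum of $\Re(\lambda(\chi(u^\alpha)))$, which links the result directly to the Kesten picture of \cite{coamenable-betti}.
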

Recall that a compact quantum group $\GG$ is called coamenable \cite{murphy-tuset} if the counit $\varps\colon \Pol(\GG)\to\CC$ extends to a character on $C(\GG_\red)$. L{\"u}ck's proof for discrete groups \cite[3.36]{Luck02} is centered around Kesten's amenability condition  and since \cite{coamenable-betti} Theorem 4.4 (see also \cite{banica-subfactor}) provides us with a Kesten condition for quantum groups we can follow the same strategy here.

\begin{proof}
Denote by $(u^\alpha)_{\alpha\in I}$ a complete family of representatives for the equivalence classes of finite dimensional (not necessarily irreducible), unitary corepresentations of $\GG$. For a unitary corepresentation $u$ we denote by $d(u)$ its dimension (i.e.~matrix size) and by $\chi(u)$ its character  $\sum_{i=1}^{d(u)}u_{ii}$. Since
\[
\Pol(\GG)=\spann_{\CC}\{u_{ij}^\alpha\mid \alpha\in I,1\leq i,j\leq d(u^\alpha) \}
\]
we get
\[
\ker(\varps)=\spann_{\CC}\{ u_{ij}^\alpha-\varps(u_{ij}^\alpha)1\mid \alpha\in I,1\leq i,j\leq d(u^\alpha) \}.
\]
Letting $T$ denote the map
\begin{align}\label{T-defi}
\bigoplus_{\alpha\in I} \bigoplus_{i,j=1}^{d(u^\alpha)} \Pol(\GG)\ni (x_{ij}^\alpha)\longmapsto \sum_{\alpha\in I}\sum_{i,j=1}^{d(u^\alpha)} (u_{ij}^\alpha-\varps(u_{ij}^\alpha)1)x_{ij}^\alpha\in \Pol(\GG),
\end{align}
we therefore get an exact sequence of right ${\Pol(\GG)}$-modules
\begin{align*}
\bigoplus_{\alpha\in I} \bigoplus_{i,j=1}^{d(u^\alpha)} \Pol(\GG)\overset{T}{\To} \Pol(\GG)\overset{\varps}{\To}\CC\To 0.
\end{align*}
Applying the right exact functor $-\odot_{\Pol(\GG)}{L^\infty(\GG)}$ we obtain another exact sequence
\[
\bigoplus_{\alpha\in I} \bigoplus_{i,j=1}^{d(u^\alpha)} {L^\infty(\GG)}\overset{T}{\To} {L^\infty(\GG)}{\To}\CC\underset{\Pol(\GG)}{\odot}{L^\infty(\GG)}\To 0.
\]
We also denote the induced map by $T$ since it is given by the exact same formula just defined on a bigger domain. Recall that
\[
H_0^{(2)}(\GG)=\Tor_0^{\Pol(\GG)}(L^\infty(\GG),\CC)\simeq L^\infty(\GG)\underset{\Pol(\GG)}{\odot}\CC
\]
an thus $H_0^{(2)}(\GG)$ vanishes iff $T$ is surjective. We therefore have to prove that $T$ is surjective exactly when $\GG$ is non-coamenable.
For each $\alpha\in I$ we consider the restricted map
\[
\bigoplus_{i,j=1}^{d(u^\alpha)}L^\infty(\GG) \ni (x_{ij})\overset{T_\alpha}{\longmapsto} \sum_{i,j=1}^{d(u^\alpha)} (u_{ij}^\alpha-\varps(u_{ij}^\alpha)1)x_{ij}^\alpha\in L^\infty(\GG)
\]
and  claim  that $T$ is surjective if and only if $T_\alpha$ is surjective for some $\alpha\in I$. Clearly surjectivity of one $T_\alpha$ implies surjectivity of $T$ so assume, conversely, that $T$ is surjective. Then there exists a finite subset $I_0\subseteq I$ and 
\[
(a_{ij}^{\alpha})\in \bigoplus_{\alpha\in I_0}\bigoplus_{i,j=1}^{d(u^\alpha)}L^{\infty}(\GG) \ \text{ such that } \
\sum_{\alpha\in I_0}\sum_{i,j=1}^{d(u^\alpha)}(u_{ij}^\alpha-\varps(u_{ij}^{\alpha})1)a_{ij}^{\alpha}=1.
\]
But the direct sum $\oplus_{\alpha\in I_0}u^\alpha$ is again a finite dimensional corepresentation and hence equivalent to $u^\beta$ for some $\beta\in I$;  it follows that  $T_\beta$ is surjective. \\

Assume first that $T$ is surjective and pick (according to the claim just proven) an $\alpha\in I$ such that $T_\alpha$ is surjective.
Then Proposition \ref{right-exact} shows that also  the continuous extension 
\[
\tilde{T}_{\alpha}: \bigoplus_{i,j=1}^{d(u^\alpha)}L^2(\GG) \To L^2(\GG)
\]
is  surjective which by Lemma \ref{surj-bij} is equivalent to bijectivity of $\tilde{T}_\alpha^{\hspace{0cm}}{\tilde{T}_{\alpha}}^*$. Using the fact that $u^{\alpha}$ is a unitary matrix and that $\varps(u_{ij}^\alpha)=\delta_{i,j}$ (see \cite{woronowicz-pseudo} Proposition 3.2), a direct calculation verifies that  $\tilde{T}_\alpha^{\hspace{0cm}}{\tilde{T}_{\alpha}}^*$ is the continuous extension of the map on $\Pol(\GG)$ given by left multiplication with the element
\[
2\Big{(}d(u^\alpha)-\frac{1}{2}\sum_{i=1}^{d(u^\alpha)} u_{ii}^\alpha+u_{ii}^{\alpha*}\Big{)},
\]
proving that
\[
\tilde{T}_{\alpha}^{}\tilde{T}_{\alpha}^*= 2\Big{(}d(u^\alpha)-\frac{1}{2}\sum_{i=1}^{d(u^\alpha)} \lambda(u_{ii}^\alpha)+\lambda (u_{ii}^\alpha)^*)\Big{)} =2\Big{(}d(u^\alpha)- \Re(\lambda(\chi(u^\alpha)))  \Big{)} .
\]
Thus $d(u^\alpha)$ is \emph{not} in the spectrum of the operator $\Re(\lambda(\chi(u^\alpha)))$ which by the Kesten condition \cite[4.4]{coamenable-betti} implies that $\GG$ can not be coamenable. If, conversely, $T$ is not surjective then $T_\alpha$ is non-surjective for each $\alpha\in I$ and  by the above analysis this means that $d(u^\alpha)$ is in the spectrum of $\Re(\lambda(\chi(u^{\alpha})))$ for each $\alpha\in I$. Since every finite dimensional, unitary corepresentation is equivalent to some $u^\alpha$ and since the characters of equivalent corepresentations are equal we conclude from the Kesten condition that $\GG$ is coamenable.

\end{proof}

\begin{remark}
Theorem \ref{H0-vanishing} is a very direct analogue of L{\"u}ck's result \cite[6.36]{Luck02}, but it also fits well with a result by Connes and Shlyakhtenko \cite[2.6]{CS} stating that the zeroth $L^2$-homology of a finite factor is non-vanishing exactly when the factor in question is the  hyperfinite (a.k.a.~amenable) one.
\end{remark} 

\begin{appendices}
\section{\texorpdfstring{Right exactness of $L^2$-completion}{Right exactness of L2-completion}}

We prove here a technical result  needed in the proof of Theorem \ref{H0-vanishing}. Let $M$ be a von Neumann algebra and let $\varphi$ be a faithful, normal state on $M$. Consider the GNS-space $H=L^2(M,\varphi)$ as well as  its associated GNS-representation $\pi\colon M\to B(H)$ and denote the natural inclusion $M\subseteq H$ by $\Lambda$.
The result needed is the following.
\begin{proposition}\label{right-exact}
A homomorphism $T\colon M^n\to M^m$ of finitely generated, free, right $M$-modules is surjective if and only if the continuous extension $\tilde{T}\colon H^n\to H^m$ is surjective.
\end{proposition}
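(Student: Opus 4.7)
My plan is to treat the two directions separately, leveraging the fact that any right $M$-module homomorphism $T\colon M^n \to M^m$ is given by left multiplication by a matrix $A = (A_{ij}) \in \operatorname{Mat}_{m,n}(M)$, so that the continuous extension $\tilde T\colon H^n \to H^m$ is simply the matrix of bounded operators with entries $\pi(A_{ij})$.

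The forward direction I would handle by a soft module-theoretic argument. Since $M^m$ is free, hence projective as a right $M$-module, a surjection $T\colon M^n \to M^m$ splits, producing a right $M$-linear right inverse $S\colon M^m \to M^n$ with $TS = \id_{M^m}$. The map $S$ is again given by a matrix over $M$ and hence extends to a bounded map $\tilde S\colon H^m \to H^n$; by continuity $\tilde T \tilde S = \id_{H^m}$, and $\tilde T$ is surjective.

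For the reverse direction my plan is to reduce the problem to the invertibility of a single matrix over $M$. A direct computation shows that $\tilde T^*$ corresponds to left multiplication by the conjugate-transpose matrix $A^* \in \operatorname{Mat}_{n,m}(M)$, so that $\tilde T \tilde T^*$ corresponds to left multiplication by $AA^* \in \operatorname{Mat}_m(M)$ acting through the amplified GNS representation $\pi_m\colon \operatorname{Mat}_m(M) \to B(H^m)$. Surjectivity of $\tilde T$ forces the positive self-adjoint operator $\tilde T \tilde T^*$ to be invertible in $B(H^m)$.

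The crucial step, and the only point where something more than bookkeeping is used, is to promote this invertibility from $B(H^m)$ down to $\operatorname{Mat}_m(M)$. This works because $\varphi$ is faithful and normal, so $\pi$ is a faithful normal representation of the von Neumann algebra $M$; its amplification $\pi_m$ then realises $\operatorname{Mat}_m(M)$ as a von Neumann subalgebra of $B(H^m)$, which is automatically inverse-closed. Hence $AA^*$ is invertible in $\operatorname{Mat}_m(M)$, and $B := A^*(AA^*)^{-1} \in \operatorname{Mat}_{n,m}(M)$ satisfies $AB = I_m$. The corresponding right $M$-module map from $M^m$ to $M^n$ is then a right inverse to $T$, so $T$ is surjective.
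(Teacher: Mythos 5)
Your proposal is correct and follows essentially the same route as the paper: both directions hinge on representing $T$ by a matrix over $M$, on the fact that surjectivity of $\tilde T$ is equivalent to invertibility of $\tilde T\tilde T^*$ (the paper isolates this as Lemma \ref{surj-bij}), and on inverse-closedness of the von Neumann algebra $\operatorname{Mat}_m(M)\subseteq B(H^m)$ to bring the inverse back down to $M$. The only cosmetic differences are that you phrase the easy direction via projectivity/splitting rather than lifting the basis vectors $e_i$ explicitly, and that you write the right inverse as the explicit matrix $A^*(AA^*)^{-1}$ instead of evaluating on $\Lambda^m(e_i)$ and invoking injectivity of $\Lambda$.
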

\begin{remark}
In the case when $\varphi$ is a tracial state this follows directly from the fact that L{\"u}ck's $L^2$-completion functor  is exact with exact inverse \cite[6.24]{Luck02}.  
\end{remark}
Before giving the proof of Proposition  \ref{right-exact} we prove a small result of purely operator theoretic nature. The result is probably well known to operator algebraists, but since we were not able to find a reference we provide the proof for the convenience of the reader. 

\begin{lemma}\label{surj-bij}
Let $H$ and $K$ be Hilbert spaces and consider an operator $T$ in $B(H,K)$. Then $T$ is surjective if and only if $TT^*$ is bijective.
\end{lemma}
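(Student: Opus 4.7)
The plan is to prove the two implications separately, using only basic Hilbert space / Banach space functional analysis (the open mapping theorem and the polarization-type identity $\langle TT^*k, k\rangle = \|T^*k\|^2$).

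The easy implication is that bijectivity of $TT^*$ forces surjectivity of $T$. Given $k\in K$, pick $k'\in K$ with $TT^*k'=k$; then $T(T^*k')=k$, so $k$ lies in the range of $T$. No further work is needed here.

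For the converse, assume $T$ is surjective. By the open mapping theorem, $T$ is open, and this translates (by a standard duality argument) into the statement that the adjoint $T^*\colon K\to H$ is bounded below, i.e. there exists $c>0$ with $\|T^*k\|\geq c\|k\|$ for all $k\in K$. From this I would compute, for arbitrary $k\in K$,
\[
\|TT^*k\|\,\|k\|\;\geq\;|\langle TT^*k,k\rangle|\;=\;\|T^*k\|^{2}\;\geq\;c^{2}\|k\|^{2},
\]
which shows that $TT^*$ is itself bounded below by $c^{2}$. Consequently $TT^*$ is injective and has closed range.

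Finally, to promote this to bijectivity I would use the self-adjointness of $TT^*$: since $\ker(TT^*)=\operatorname{range}(TT^*)^{\perp}$ and the kernel is trivial, the range is dense; being closed as well, it must equal $K$. Hence $TT^*$ is bijective, completing the proof. The only nontrivial ingredient is the passage from surjectivity of $T$ to the lower bound on $T^*$, which is a classical consequence of the open mapping theorem and is arguably the step where the work really happens; everything else is a direct manipulation of inner products.
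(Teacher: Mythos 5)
Your proof is correct, but it takes a different route from the paper's. The paper argues via the polar decomposition: writing $T^*=U(TT^*)^{1/2}$ and $T=(TT^*)^{1/2}U^*$, it reads off that $(TT^*)^{1/2}$ is injective (from injectivity of $T^*$, which follows from $\ker T^*=(\operatorname{ran}T)^\perp=0$) and surjective (from surjectivity of $T$), hence bijective, and so is $TT^*$. You instead invoke the open mapping theorem to get that $T^*$ is bounded below, deduce $\|TT^*k\|\geq c^2\|k\|$ from the identity $\langle TT^*k,k\rangle=\|T^*k\|^2$, and then use self-adjointness ($\ker(TT^*)=\operatorname{ran}(TT^*)^\perp$) to upgrade ``injective with closed range'' to ``bijective.'' Both arguments are complete; the paper's is shorter once one accepts the polar decomposition as a black box, while yours trades that for the classical duality ``surjective $\Rightarrow$ adjoint bounded below,'' which is the one step you should make sure you can actually justify (it follows from the open mapping theorem: $T(B_H)\supseteq cB_K$ for some $c>0$, whence $\|T^*k\|\geq\sup_{\|h\|\leq 1}|\langle T^*k,h\rangle|\cdot$(appropriate normalization)$\geq c\|k\|$). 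Your version also generalizes verbatim to the Banach space setting of a surjection onto a Hilbert space, whereas the polar decomposition argument is intrinsically Hilbertian; for the purposes of this paper either suffices.
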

\begin{proof}
If $TT^*$ is bijective then clearly $T$ is surjective. If $T$ is surjective then $T^*$ is injective and by considering the polar decomposition $T^*=U(TT^*)^{\frac12}$ as well as the adjoint relation $T=(TT^*)^{\frac12}U^*$ we conclude that $(TT^*)^{\frac12}$ is both injective and surjective. Hence the same is true for $TT^*$.
\end{proof}

\begin{proof}[Proof of Proposition \ref{right-exact}]
Denote by $e_1,\dots, e_m$ the standard basis in $M^m$ and denote, for $\xi\in H$, by $\xi_i$ the vector in $H^m$ which has $\xi$ as its $i$-th coordinate and zeros everywhere else.  Since $T$ is right $M$-linear it is given by multiplication from the left by an $m\times n$ matrix $(a_{ij})$ with entries from $M$ and the extension $\tilde{T}$ is just the operator $(\pi(a_{ij}))\in B(H^n,H^m)$.  Assume first that $T$ is surjective. To prove that $\tilde{T}$ is surjective it is enough to show that $\xi_i$ is in its range for each $\xi\in H$ and $i\in\{1,\dots, m \}$. Since $T$ is surjective we can find $x_1,\dots, x_n\in M$ such that
\[
T\left[ {\begin{array}{c}
 x_1 \\
 \vdots  \\
 x_n
 \end{array} } \right] =
\left[ 
{\begin{array}{ccc}
 a_{11} & \cdots  &a_{1n}  \\
 \vdots &        &   \vdots\\
 a_{m1} & \dots& a_{mn}  \\
 \end{array} } 
 \right]
\left[ {\begin{array}{c}
 x_1 \\
 \vdots  \\
 x_n
 \end{array} } \right]= e_i,
 \]
and hence we get
\[
\tilde{T}
\left[ {\begin{array}{c}
 \pi(x_1)\xi \\
 \vdots  \\
 \pi(x_n)\xi
 \end{array} } \right] =\xi_i,
\]
and we conclude that $\tilde{T}$ is surjective. Assume, conversely, that $\tilde{T}$ is surjective. By Lemma \ref{surj-bij} the operator $\tilde{T}\tilde{T}^*$ is invertible and hence there exists an $m\times m$-matrix $S$ with entries in $M$ such that  $\tilde{T}\tilde{T}^*\tilde{S}=\id_{H^m}$.
To prove that $T$ is surjective we just need to prove that $e_1,\dots, e_m$ are all in its range. Fix $i\in \{1,\dots, m\}$ and note that $\tilde{T}\tilde{T}^*\tilde{S} \Lambda^m(e_i) =\Lambda^m (e_i)$.  Since both ${T}$ and ${S}$ are  matrices with entries from $M$ the vector $\tilde{T}^*\tilde{S} \Lambda^m(e_i)$ is of the form
\[
\left[ {\begin{array}{c}
 \Lambda(x_1) \\
 \vdots  \\
 \Lambda(x_n)
 \end{array} } \right] 
\]
for some  $x_1,\dots, x_n \in M$ and since $\Lambda$ is injective we conclude that 
\[
T
\left[ {\begin{array}{c}
 x_1 \\
 \vdots  \\
 x_n
 \end{array} } \right]=e_i.
\]

\end{proof}

\end{appendices}

\def\cprime{$'$}

\end{document}